\patchcmd{\section}{\scshape}{\bfseries}{}{}
\renewcommand{\@secnumfont}{\bfseries}
\newcommand{\mycomment}[1]{}
\theoremstyle{definition}
\newtheorem{mydef}{\textbf{Definition}}[section]
\newtheorem{myeg}[mydef]{\textbf{Example}}
\newtheorem{rmk}[mydef]{\textbf{Remark}}
\newtheorem{construction}[mydef]{\textbf{Construction}}
\theoremstyle{plain}
\newtheorem{mythm}[mydef]{\textbf{Theorem}}
\newtheorem{lem}[mydef]{\textbf{Lemma}}
\newtheorem{claim}[mydef]{\textbf{Claim}}
\newtheorem*{que}{\textbf{Question}}
\begin{document}

\title{Nontrivial solutions for homogeneous linear equations over some non-quotient hyperfields }

	\author{David Hobby}
	\address{State University of New York at New Paltz, NY, USA}
	\curraddr{}
	\email{hobbyd@newpaltz.edu}

	\author{Jaiung Jun}
	\address{State University of New York at New Paltz, NY, USA}
	\curraddr{}
	\email{junj@newpaltz.edu}
	
	\makeatletter
	\@namedef{subjclassname@2020}{%
		\textup{2020} Mathematics Subject Classification}
	\makeatother
	
	\subjclass[2020]{16Y20 (primary), 15A06 (secondary).}
	\keywords{non-quotient hyperfields, homogeneous system of linear equations over hyperfields, hyperfields}
	\date{}
	
	\dedicatory{}

\maketitle

\begin{abstract}
We introduce a class of hyperfields which includes several constructions of non-quotient hyperfields. We then use it to partially answer a question posed by M.~Baker and T.~Zhang:  Does a system of homogeneous linear equations with more unknowns than equations always have a nonzero solution?
We also consider a class of hyperfields that was claimed in the literature to be non-quotient, and show that this is false.
\end{abstract}


\section{Introduction}

Hyperfields are algebraic structures which assume the same axioms as fields except that one allows addition to be \emph{multi-valued}. An idea of multi-valued addition may seem exotic, however, for any given field $F$ and a multiplicative subgroup $G\leq F^\times$, the pair $(F,G)$ naturally defines a hyperfield $F/G$, called a quotient hyperfield (Example \ref{example: quotient}). In fact, Krasner \cite{krasner1956approximation} utilized the notion of quotient hyperfields to define a notion of limits of local fields, and investigated their algebraic extensions and Galois groups in positive characteristic. In doing so, Krasner \cite{krasner1983class } noticed that in order for the theory of hyperfields (or hyperrings) to be robust, one needed to find examples of non-quotient hyperfields. Shortly after, C.G.~Massouros \cite{massouros1985methods, massouros1985theory} and A.~Nakassis \cite{nakassis1988recent} found examples of non-quotient hyperfields. Over time, more hyperfields were proved to be non-quotient.  We refer the reader to \cite{massouros2023on} for the overview.

Recently, hyperfields are being actively studied in connection to many areas of mathematics. For instance, several authors employ hyperfields in number theory and algebraic geometry: A.~Connes and C.~Consani \cite{con3}, O.~Viro \cite{viro2010}, O.~Lorscheid \cite{lorscheid2022tropical}, and J.~Jun \cite{jun2017geometry}. 

One manifestation is that the flexibility of the structure of hyperfields allows one to unify various mathematical objects under the same framework. In fact, in their celebrated work \cite{baker2017matroids}, M.~Baker and N.~Bowler unify various generalizations of matroids by using hyperfields, thereby allowing one to prove theorems concerning all these generalizations of matroids at the same time. 

With the same motivation, in \cite{baker2021descartes}, M.~Baker and  O.~Lorscheid  develop a theory of multiplicities of roots for polynomials over hyperfields to unify both Descartes' rule of signs and Newton's polygon rule. This naturally leads one to the work of M.~Baker and T.~Zhang \cite{baker2022some}, where they unify and provide conceptual proof for various results about ranks of matrices that are scattered throughout the literature by using the framework of matroids over hyperfields. 

With the notions of ranks of matrices over hyperfields, one may ask an elementary yet fundamental question from the classical rank-nullity theorem as follows:

\begin{que}\cite[Question 5.13]{baker2022some}
Does a system of homogeneous linear equations over a hyperfield $H$ with more unknowns than equations always have a nontrivial solution?
\end{que}

One can easily check that if a hyperfield $H$ is a quotient hyperfield, say $H=F/G$, then one can reduce this question to the case of the field $F$, which then directly follows from the rank-nullity theorem.
It naturally follows that one should investigate the question for the remaining hyperfields, the non-quotient ones.

In this paper, we partially answer the above question by showing that there exists an infinite class of non-quotient hyperfields which give a positive answer to the question. 
This class contains the original examples in \cite{massouros1985methods} and \cite{massouros1985theory}, but not the examples from \cite{nakassis1988recent}.
Accordingly, we call them Massouros hyperfields in our Definition \ref{definition: Massouros def}.

We then prove that Massouros hyperfields have the property that ``Fewer Equations Than Variables Implies Nontrivial Solutions'', which we use the acronym FETVINS for.

Our main idea is based on an observation that for Massouros hyperfields, cleverly choosing the values of three terms in a homogeneous linear equation guarantees that it has a nontrivial solution.
This works because the hyperaddition of the three terms will already be the entire hyperfield $F$.
By our Lemma \ref{permanent F lemma}, once this happens adding more terms keeps the sum equal to $F$.
We introduce the notion of hyperfields with ``large sums'' to codify this phenomenon in Definition \ref{definition: larger sums}. 

This essentially lets us reduce a system of homogeneous linear equations to a system with exactly three terms in each equation. Then, we prove that one can find a nontrivial solution in this case.


\section{Preliminaries}

In this section, we recall basic definitions and examples for hyperfields. 

\begin{mydef} \label{hyperaddition definition}
Let $H$ be a nonempty set. By a \emph{hyperaddition}, we mean a function
\begin{equation*}
+:H \times H \to \mathcal{P}^*(H), 
\end{equation*}

where $\mathcal{P}^*(H)$ is the set of nonempty subsets of $H$, satisfying the following conditions:
\begin{enumerate}
    \item
$a+b=b+a$.
\item 
$(a+b)+c=a+(b+c)$.
\end{enumerate}
\end{mydef}

Since sums are usually multi-valued, we adopt the following conventions.
We write ``$x$'' for the one-element set $\{x\}$.  
For sets $A$ and $B$, $A+B$ denotes the union of all the sets $a+b$ for $a \in A$ and $b \in B$.

\begin{mydef} \label{hypergroup definition}
By a \emph{hypergroup}, we mean a set $H$ with  hyperaddition which satisfies the following conditions:
\begin{enumerate}
    \item 
$\exists~!~ 0\in H$ such that $0+h=h$ for all $h \in H$.
\item 
For each $x \in H$, $\exists~!~y \in H$ such that $0 \in x+y$. We denote $y:=-x$.
\item 
If $x \in y+z$, then $z \in (x-y):=x+(-y)$. 
\end{enumerate}
\end{mydef}

\begin{mydef} \label{hyperring definition}
By a \emph{hyperring}, we mean a set $R$ with hyperaddition $+$ and usual multiplication $\cdot$ satisfying the following:
\begin{enumerate}
    \item 
$(H,+)$ is a hypergroup. 
\item 
$(H,\cdot)$ is a multiplicative monoid. 
\item 
$a(b+c)=ab+ac$ for all $a,b,c \in R$. 
\item 
$r\cdot 0 = 0$ for all $r \in R$. 
\end{enumerate}
If $H - \{0\}$ is a multiplicative group, then $H$ is said to be a \emph{hyperfield}.
\end{mydef}

\begin{myeg}[Krasner hyperfield]
Let $\mathbb{K}=\{0,1\}$. We define the following addition:
\[
0+1=1, \quad 0+0=0, \quad 1+1=\{0,1\}.
\]
Multiplication is same as $\mathbb{F}_2$. Then $\mathbb{K}$ is a hyperfield, called the \emph{Krasner hyperfield}. 
\end{myeg}

\begin{myeg}[Sign hyperfield]
Let $\mathbb{S}=\{-1,0,1\}$. We define the following addition:
\[
0+1=1, \quad 0+(-1)=-1,\quad 0+0=0, \quad 1+1=1, \quad (-1)+(-1)=-1, \quad 1+(-1)=\mathbb{S}. 
\]
Multiplication is same as $\mathbb{F}_3$. Then $\mathbb{S}$ is a hyperfield, called the \emph{sign hyperfield}. 
\end{myeg}

\begin{myeg}[Tropical hyperfield]
Let $\mathbb{T}:=\mathbb{R}\cup \{-\infty\}$. Hyperaddition for $\mathbb{T}$ is as follows: for $a,b \in \mathbb{T}$, 
\[
a+b=\begin{cases}
\max\{a,b\} \textrm{ if $a\neq b$,} \\
[-\infty, a] \textrm{  if $a=b$}.
\end{cases}
\]
Multiplication is the usual addition of real numbers with $a\cdot(-\infty)= (-\infty)\cdot a =-\infty$. Then $\mathbb{T}$ is a hyperfield, called the \emph{tropical hyperfield}. 
\end{myeg}

\begin{myeg}[Phase hyperfield]
Let $\mathbb{P}=S^1\cup \{0\}$, where $S^1$ is the complex unit circle. Multiplication is usual multiplication of complex numbers, and addition is defined as follows: for $a, b \in S^1$
\begin{equation}
a+b=\begin{cases}
\{0,a,-a\} \textrm{ if $b=-a$ }, \\
\textrm{all points in the shorter of the two open arcs of $S^1$ connecting $a$ and $b$ if $b\neq a$.}
\end{cases}
\end{equation}
Also $a+0=a$ for all $a \in \mathbb{P}$. Then $\mathbb{P}$ is a hyperfield, called the \emph{phase hyperfield}. 
\end{myeg}

\begin{myeg}[Quotient hyperfield]\label{example: quotient}
In general, let $A$ be a commutative ring and $G \leq A^\times$, the multiplicative group of the units of $A$. Then $G$ naturally acts on $A$ by multiplication and we obtain the set $A/G$ of equivalence classes. Let $[a]$ be the equivalence class of $a \in A$. One defines the following hyperaddition:
\[
[a]+[b]:=\{[c] \mid c=ag_1+bg_2 \textrm{ for some } g_1,g_2 \in G\}.
\]
Multiplication is $[a]\cdot[b]=[ab]$. With these two operations, $A/G$ becomes a hyperring and if $A$ is a field, then $A/G$ is a hyperfield. 
\end{myeg}

\begin{myeg}
Let $G=\{1,-1,i,-i\} \leq \mathbb{C}^\times$, and $R=\mathbb{C}/G$. Then one has $[1]=\{1,-1,i,-i\}=G$, and
\[
[1]+[1]=\{[c] \mid c=a+b, \textrm{ where } a,b \in G\}=\{[0],[1+i] ,[2]\}.
\]
In particular, $[1]=-[1]$.
\end{myeg}


\section{FETVINS hyperfields}

We will partially answer a question asked by M.~Baker and T.~Zhang in \cite{baker2022some}.
To facilitate our discussion, we name the property in question.

\begin{mydef} \label{definition: FETVINS definition}
We say that a hyperfield $F$ has the 
{\em FETVINS} property if
``Fewer Equations Than Variables Implies Nontrivial Solutions''.
That is, whenever we have a system of $k$ homogeneous linear equations in $n$ variables where $k < n$, there are values for the variables $x_1, x_2, \dots, x_n$ which are not all zero, so that all $k$ of the homogeneous linear equations are true.
\end{mydef}

In this terminology, Baker and Zhang ask :  ``Do all hyperfields have FETVINS?'' It is observed in \cite{baker2022some} that since all fields have FETVINS, all quotient hyperfields do.
Thus it is natural to investigate whether or not FETVINS holds in non-quotient hyperfields.
We will establish that FETVINS does hold in various known examples of non-quotient hyperfields, providing evidence that it could be true that all hyperfields had FETVINS.

In a series of papers starting with \cite{massouros1985methods}, C. Massouros and others produced examples of hyperfields which were not produced as quotients of fields.
We call these hyperfields {\em non-quotient}. We refer the reader to a recent survey article \cite{massouros2023on}.
The following class contains the first hyperfields proved to be non-quotient.

\begin{construction}\label{construction: Massouros def}
Consider hyperfields constructed as follows.
Let $G$ be an abelian group written multiplicatively, with identity $1$.
Let $F$ be the set $G \cup {0}$.
Define multiplication on $F$ to be that of $G$, with the added condition that $0 \cdot x = x \cdot 0 = 0$ for all $x \in F$.
Now define the multi-valued addition on $F$ as follows.
\begin{enumerate}
    \item $0 + x = x + 0 = \{x\}$ for all $x$,
    \item $x + x = F - \{x\}$ for all $x \neq 0$, and 
    \item $x + y = y + x = \{x,y\}$ whenever $x$ and $y$ are distinct and nonzero.
\end{enumerate}
We denote the hyperfield $F$ obtained in this way from $G$ by $F_G$.
\end{construction}

C.~Massouros introduced the above class of hyperfields in \cite{massouros1985methods}, and showed there that some hyperfields in the class were non-quotient.

Massouros also looked at similar structures which were slightly more complicated.
For example, the non-quotient hyperfield in the paper \cite{massouros1985theory} has an underlying multiplicative group which is a direct product of a group $G$ and the two element group $\{1,-1\}$, where the negative of $(x,i) \in G \times \{1,-1\}$ is $(x,-i)$. 
Massouros defines the addition of this hyperfield so that the sum of two elements is mostly determined by their components in $G$.
Observing that the multiplicative group has an obvious homomorphism onto the group $G$, we can generalize this construction to include all hyperfields with a similar homomorphism.

All of these first hyperfields proved non-quotient by Massouros are contained in the following class.
It may also contain some quotient hyperfields, but that is irrelevant.
We will show that all hyperfields in the class have FETVINS.

\begin{mydef}\label{definition: Massouros def}
Let $G$ be an abelian group, written multiplicatively. Let $\phi : G \to H$ be a group homomorphism where $|\phi(G)| \geq 3$.
Adjoin $0$ to $G$, and let $F = G \cup \{0\}$.
Define multiplication on $F$ by using the multiplication of $G$ with the additional condition that $0 \cdot x = x \cdot 0 = 0$ for all $x \in F$.
Then the hyperfield $F$ (constructed in Construction \ref{construction: Massouros def}) is {\em Massouros} if and only if its hyperaddition satisfies the following conditions.
\begin{enumerate}
    \item For all $x \in G$, $\phi(-x) = \phi(x)$.
    \item For all $x,y \in G$, $\phi(x) = \phi(y)$ implies $G - \phi^{-1}(\phi(x)) \subseteq x + y$.  
    \item For all $x,y \in G$ with $\phi(x) \neq \phi(y)$, $\phi^{-1}(\{\phi(x),\phi(y)\}) \subseteq x + y$.
\end{enumerate}
\end{mydef}

Here is one of Massouros' original definitions of a class containing non-quotient hyperfields.
(From \cite{massouros1985theory}, or as it is repeated in \cite{massouros2023on}.)
\begin{mydef} \label{definition: Massouros original}
Let $K$ be a multiplicative group with more than two elements, and consider its direct product $L$ with the two element group $\{1,-1\}$.
Adjoin a new element $0$ to $L$, define $0 \cdot 0 = 0$, $0 + 0 = 0$ and define 
$(x,i) \cdot 0 = 0 \cdot (x,i) = 0$ and 
$(x,i) + 0 = 0 + (x,i) = (x,i)$ for all $(x,i)$ in $L$. 
We also define
\begin{enumerate}
    \item $(x,i) + (x,i) = L - \{(x,i),(x,-i)\}$,
    \item $(x,i) + (x,-i) = \{0\} \cup (L - \{(x,i),(x,-i)\})$, and 
    \item $(x,i) + (w,j) = \{(x,i),(x,-i),(w,j),(w,-j)\}$
\end{enumerate}
for all $(x,i), (w,j) \in L$ with $x \neq w$.
\end{mydef}

Here is a small example of a Massouros hyperfield, constructed similarly to Definition \ref{definition: Massouros original}.

\begin{myeg}\label{example: Massouros}
We let $K$ be $\{1,a,a^2\}$ under multiplication, so $a$ is a primitive third root of $1$.
Then we form $L = K \times \{1,-1\}$, but for simplicity we will write it as $\{1,-1,a,-a,a^2,-a^2\}$, where 
$\langle 1,1 \rangle = 1$, 
$\langle 1,-1 \rangle = -1$, 
$\langle a,1 \rangle = a$, 
$\langle a,-1 \rangle = -a$, 
$\langle a^2,1 \rangle = a^2$, and $\langle a^2,-1 \rangle = -a^2$.
For the homomorphism $\phi$, we use the projection of $K \times \{1,-1\}$ onto $K$.
Thus $\phi(1) = \phi(-1) = 1$, 
$\phi(a) = \phi(-a) = a$ and
$\phi(a^2) = \phi(-a^2) = a^2$,
showing that (1) of Definition \ref{definition: Massouros def} is satisfied.

To see (2) of Definition \ref{definition: Massouros def}, note that $\phi((x,i)) = \phi((w,j))$ implies $(w,j) = (x,\pm i)$, and that
\[
L - \phi^{-1}(\phi((x,i))) = L - \{(x,i),(x,-i)\}.
\]

For (3), let $(x,i)$ and $(w,j)$ be given, where 
$\phi((x,i)) \neq \phi((w,j))$.
Then $x \neq w$, and $(x,i) + (w,j) = \{(x,i),(x,-i),(w,j),(w,-j)\} = \phi^{-1}(\{\phi((x,i)),\phi((w,j))\}$.
For example, $\phi(a) = a \neq a^2 = \phi(-a^2)$, and 
$a + -a^2 = \{a,-a,a^2,-a^2\} \subseteq \phi^{-1}(\{a,a^2\} = \phi^{-1}(\{\phi(a),\phi(-a^2)\})$.
\end{myeg}

Our definition of ``large sums'' will allow elements to be distinct from their negations.

\begin{mydef}\label{definition: larger sums}
Let $F=F_G$ be as in Definition \ref{definition: Massouros def}. We say $F$ has {\em large sums} if there is some nontrivial group homomorphism $f : G \to H$ satisfying the following condition:
\[
\forall~x,y,z \in G~ \textrm{ if } f(x) = f(y) \neq f(z), \textrm{ then } x+y+z = F.
\]
\end{mydef}

With this definition, all Massouros hyperfields have large sums.

\begin{lem}
Let $F=(F_G,\phi)$ be a Massouros hyperfield, where $\phi : G \to H$ is a nontrivial group homomorphism. Then $F$ has large sums.    
\end{lem}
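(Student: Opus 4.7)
The plan is to verify the large-sums condition using the very homomorphism $\phi : G \to H$ that comes with the Massouros structure. This $\phi$ is nontrivial and, crucially, Definition \ref{definition: Massouros def} guarantees $|\phi(G)| \geq 3$. Fix $x,y,z \in G$ with $\phi(x) = \phi(y) \neq \phi(z)$; the task is to show $x+y+z = F$, and since $x+y+z \subseteq F$ is automatic, only the reverse inclusion $F \subseteq x+y+z$ requires work.

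First I would place $0$ into the sum. By condition (1) of Definition \ref{definition: Massouros def}, $\phi(-z) = \phi(z) \neq \phi(x)$, so $-z \in G - \phi^{-1}(\phi(x))$, which by condition (2) (applied to $x,y$) is contained in $x+y$. The hypergroup axiom then gives $0 \in -z + z \subseteq x+y+z$.

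Next, for each $g \in G$, I would split on whether $\phi(g) = \phi(z)$. In the case $\phi(g) \neq \phi(z)$, I would reuse the fact that $-z \in x+y$; applying condition (2) this time to the pair $-z,z$ (which satisfy $\phi(-z) = \phi(z)$) yields $G - \phi^{-1}(\phi(z)) \subseteq -z + z$, and $g$ is in the left-hand side since $\phi(g) \neq \phi(z)$. In the case $\phi(g) = \phi(z)$, I would invoke $|\phi(G)| \geq 3$ to choose $a \in G$ with $\phi(a) \notin \{\phi(x), \phi(z)\}$; then condition (2) places $a \in x+y$, condition (3) gives $\phi^{-1}(\{\phi(a),\phi(z)\}) \subseteq a+z$, and since $\phi(g) = \phi(z)$ we conclude $g \in a+z \subseteq x+y+z$.

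The only genuine obstacle is the sub-case $\phi(g) = \phi(z)$: neither $-z$ nor any element of $\phi^{-1}(\phi(x))$ is guaranteed by conditions (2) or (3) to pull $g$ into $a+z$, and this is precisely where the requirement $|\phi(G)| \geq 3$ built into the Massouros definition is essential—one needs a third $\phi$-fiber from which to draw $a$. Everything else is routine bookkeeping of the containments in Definition \ref{definition: Massouros def} together with the hypergroup axioms.
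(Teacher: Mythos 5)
Your proposal is correct and follows essentially the same route as the paper's proof: put $0$ in the sum via $-z \in x+y$, get $G - \phi^{-1}(\phi(z)) \subseteq -z+z \subseteq x+y+z$ from conditions (1) and (2), and handle the remaining fiber $\phi^{-1}(\phi(z))$ by choosing, thanks to $|\phi(G)| \geq 3$, an element $a \in x+y$ with $\phi(a) \notin \{\phi(x),\phi(z)\}$ and applying condition (3). Your write-up is slightly more explicit than the paper's (which leaves the use of condition (3) implicit in the final step), but the argument is the same.
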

\begin{proof}
Let $x,y,z \in G$ with $\phi(x) = \phi(y) \neq \phi(z)$. Since $F$ is Massouros, we have that
\[
G - \phi^{-1}(\phi(x)) \subseteq x+y, \quad \phi(-z) = \phi(z) \neq \phi(x).
\]
It follows that $-z \in x+y$ and $0 \in -z + z \subseteq (x+y) + z$.
Similarly, $\phi(-z) = \phi(z)$ implies that
\[
G - \phi^{-1}(\phi(z)) \subseteq (-z + z) \subseteq x+y+z.
\]
It remains to show that $\phi^{-1}(\phi(z))$ is a subset of $x+y+z$.
So let $w \in \phi^{-1}(\phi(z))$ be given.
Since $|\phi(G)| \geq 3$, there exists $u \in G$ with $\phi(u)$ distinct from $\phi(x)$ and $\phi(z)$.
Then $u \in x+y$, and $w \in u+z \subseteq x+y+z$.    
\end{proof}

We now consider a situation analogous to one in linear algebra.
We often write $Y \ni x$ to mean $x \in Y$.
A statement of the form
$a_1 x_1 + a_2 x_2 + \cdots +a_n x_n \ni 0$
will be called a {\em homogeneous linear equation} with coefficients 
$a_1, a_2, \dots, a_n$.
We adopt the convention that terms with coefficients of zero do not appear, so that $a_1, a_2, 
 \dots, a_n$ are all nonzero in $a_1 x_1 + a_2 x_2 + \cdots + a_n x_n \ni 0$.
 We intend to show that all Massouros hyperfields have FETVINS.

Our plan is to use that Massouros hyperfields have large sums.
Then for a homogeneous linear equation $a_1 x_1 + a_2 x_2 + \cdots + a_n x_n \ni 0$, we can use three variables of our choice to get a sum of $F$.
For example, we pick $x_1$, $x_2$ and $x_3$ so that $\phi(a_1 x_1) = \phi(a_2 x_2) \neq \phi(a_3 x_3)$.
Then $a_1 x_1 + a_2 x_2 + a_3 x_3 = F$ since we have large sums.
The following obvious lemma then implies that we can ignore the remaining $n-3$ variables since the homogeneous linear equation will now be true regardless of their values.

\begin{lem} \label{permanent F lemma}
In any hyperfield $F$, $F + x = F$ for any $x \in F$.   
\end{lem}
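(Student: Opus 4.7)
The plan is to prove the two inclusions $F+x \subseteq F$ and $F \subseteq F+x$ separately; only the second requires any argument.

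The containment $F + x \subseteq F$ is immediate from the definition of hyperaddition: since $+$ takes values in $\mathcal{P}^*(F)$, every set $a + x$ with $a \in F$ lies in $F$, so the union $F + x = \bigcup_{a \in F} (a+x)$ is a subset of $F$.

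For the reverse containment, fix any $y \in F$; the goal is to produce some $a \in F$ with $y \in a + x$. The natural candidate is any element of $y + (-x)$, which is nonempty because hyperaddition always returns a nonempty set. Pick such an $a \in y + (-x)$. Applying commutativity gives $a \in (-x) + y$, and then the reversibility axiom (3) of Definition~\ref{hypergroup definition} (with its ``$x$'' being our $a$, its ``$y$'' being $-x$, and its ``$z$'' being $y$) yields $y \in a - (-x) = a + x \subseteq F + x$. Hence $F \subseteq F + x$, and we conclude $F + x = F$.

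There is no real obstacle here; the only thing to be careful about is lining up the roles of the variables when invoking reversibility. Note in particular that the proof uses only the hypergroup structure and does not require $x$ to be nonzero, which is appropriate since the intended application is to add an arbitrary further term to a hypersum that already equals $F$.
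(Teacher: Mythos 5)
Your proof is correct. The paper's own argument reaches the same intermediate fact by a slightly different mechanism: it writes $y = y+0 \subseteq y + ((-x)+x) = (y+(-x))+x \subseteq F+x$, using only the identity, the existence of $-x$, and associativity, and never invokes the reversibility axiom. You instead pick an explicit $a \in y+(-x)$ (using nonemptiness of hypersums) and apply reversibility, axiom (3) of Definition~\ref{hypergroup definition}, to conclude $y \in a + x$. Both routes are one-liners; the paper's has the small advantage of not needing reversibility at all, while yours quietly uses the identity $-(-x) = x$ in the step $a - (-x) = a + x$ --- this is harmless, since $0 \in x + (-x)$ together with the uniqueness clause in axiom (2) gives $-(-x) = x$, but it deserves a word of justification if you keep that version. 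Your observation that $x$ need not be nonzero applies equally to the paper's proof.
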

\begin{proof}
Let $x$ be given, so we must show $y \in F+x$ for an arbitrary $y \in F$. But, one can easily see  
\[
y = y+0 \subseteq y + ((-x) + x) = (y + (-x)) +x \subseteq F+x.
\]   
\end{proof}

Another way to look at what we are doing is that we are solving ``strengthened'' equations.
\begin{mydef}\label{definition: strengthening}
In the hyperfield $F$, the {\em strengthening} of the equation $a_1 x_1 + a_2 x_2 + \cdots + a_k x_k \ni 0$ is the equation $a_1 x_1 + a_2 x_2 + \cdots + a_k x_k \supseteq F$.
\end{mydef}

\begin{rmk}
   Lemma \ref{permanent F lemma} gives us that any assignment of values to $x_1, x_2, \dots, x_k$ that solves $a_1 x_1 + a_2 x_2 + a_3 x_3 \supseteq F$ also solves $a_1 x_1 + a_2 x_2 + \cdots + a_k x_k \ni 0 $. 
\end{rmk}

Now we can state our main theorem. 

\begin{mythm}\label{theorem: main theorem}
Every Massouros hyperfield has FETVINS.
\end{mythm}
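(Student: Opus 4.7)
The plan is to apply the strengthening of Definition \ref{definition: strengthening}: it suffices that each equation's hypersum equals all of $F$, which by Lemma \ref{permanent F lemma} together with the large sums property just established happens as soon as three of its terms realize the pattern $\phi(a_{i,j_1}x_{j_1})=\phi(a_{i,j_2}x_{j_2})\ne\phi(a_{i,j_3}x_{j_3})$. The theorem thus reduces to finding a nonzero assignment $(x_1,\dots,x_n)\in F^n$ such that every equation contains such a triple.

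I would proceed by induction on $k$, handling degenerate equations first. An equation with one nonzero term $ax_j\ni 0$ forces $x_j=0$; deleting this variable and the equation preserves $n>k$, and the inductive hypothesis applies. An equation with two nonzero terms $ax_j+bx_\ell\ni 0$ forces $x_\ell=-ab^{-1}x_j$ by the uniqueness of the negative in a hypergroup, so we may again eliminate one variable and one equation while preserving $n>k$. After iterating, every remaining equation has support $T_i$ of size at least three. In this main case, I would fix two distinct classes $\alpha,\beta\in\phi(G)$ (available since $|\phi(G)|\ge 3$) and, for each equation $E_i$, pick a distinguished ``odd-one-out'' variable $r_i\in T_i$ together with two matched variables $s_i,t_i\in T_i\setminus\{r_i\}$, assigning $\phi$-classes to the variables so that $\phi(a_{i,r_i}x_{r_i})=\beta$ while $\phi(a_{i,s_i}x_{s_i})=\phi(a_{i,t_i}x_{t_i})=\alpha$. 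Then large sums together with Lemma \ref{permanent F lemma} make each equation true, and the distinguished variables being nonzero gives nontriviality.

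The main obstacle will be carrying out this combinatorial selection consistently when variables are shared across many equations. A naive appeal to Hall's theorem applied to the sets $T_i$ can fail---for instance when four equations are all supported on the same three variables---but the slack $n>k$ always provides a workaround via a deficiency-version argument: if a subfamily of $m$ equations has joint support of only $m-d$ variables, zeroing out those variables renders the whole subfamily vacuous ($0\ni 0$), while the remaining $k-m$ equations live in $n-(m-d)$ variables with strictly larger slack $n-k+d$. Iterating this zero-out reduction together with the degenerate-equation reductions above, the process terminates with a system whose supports satisfy Hall's condition, at which point the SDR-based triple selection together with the freedom $|\phi(G)|\ge 3$ yields the desired $\phi$-class assignment and hence the required nontrivial solution.
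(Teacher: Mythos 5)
Your overall strategy (strengthen the equations as in Definition \ref{definition: strengthening}, then use large sums plus Lemma \ref{permanent F lemma} so that a triple of terms with $\phi$-images of the form ``equal pair plus a different one'' makes an equation true) is the same as the paper's starting point, and your reductions for one- and two-variable equations and your ``zero out a deficient subfamily'' step correspond to the paper's elimination of short equations and of piles (Definition \ref{definition: subsystem and pile}). But the combinatorial heart of your argument has a genuine gap. The condition you must arrange for variable $x_j$ in equation $i$ is on $\phi(a_{i,j}x_j)=\phi(a_{i,j})\phi(x_j)$, so when a variable is shared by several equations the requirements are multiplicative constraints on the single value $\phi(x_j)$ that involve the (unrelated) coefficients. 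Your fixed global pair $(\alpha,\beta)$ scheme already breaks on two equations sharing their matched pair: if equations $1$ and $2$ both use $\{x,y\}$ as the matched pair, you need $\phi(a_{1,x})\phi(a_{1,y})^{-1}=\phi(x)^{-1}\phi(y)=\phi(a_{2,x})\phi(a_{2,y})^{-1}$, which can simply fail. Hall's theorem / an SDR only makes the \emph{distinguished} variables distinct; it says nothing about these consistency constraints on the matched variables, and no argument is given that, after your reductions, a consistent choice of pairs and classes exists. That existence statement is essentially the whole theorem, and it is exactly where the paper has to work hardest.

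The paper's route is different at this point and you would need something like it: after reducing to a pilefree system in which every equation has exactly three variables (Claim \ref{3-variable claim}, itself a nontrivial counting argument, since truncating long equations can create piles), it proves Claim \ref{claim: second claim} by induction on the number of variables. One picks a variable $z$ lying in at most two equations (pigeonhole, using pilefree-ness and more variables than equations), rescales and merges its two equations into a single four-term equation, checks the smaller system is still pilefree, solves it inductively with all variables nonzero, and only then chooses $z$ by a case analysis. Crucially, in the case where the four remaining terms have pairwise distinct $\phi$-images, the ``pair plus odd one out'' pattern is not arranged at all: instead one uses that the merged strengthened equation holds to find $s\in au+bw$ with $-s\in d'x+e'y$, sets $cz=s$, and verifies both equations equal $F$ directly from the Massouros axioms applied to $s$ and $-s$. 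So your plan is missing both the bookkeeping that keeps the reductions pile-safe and, more importantly, a mechanism for the configurations where a global $\phi$-class assignment realizing your pattern in every equation is not available.
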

\begin{proof}
Our proof is long, and best presented as a sequence of claims and definitions.

Let us first agree on standard terminology. We will look at hyperfields $F = G \cup \{0\}$ which have large sums using the group homomorphism $\phi : G \to H$.
Then we will consider systems of $e$ homogeneous linear equations in $v$ variables, with nonzero coefficients from $F$.
We must show nontrivial solutions exist whenever $v > e$. 

First observe that it suffices to prove FETVINS in the restricted case where none of the equations in a homogeneous system has exactly one or two variables occurring in it. This is because such equations can be removed from the system while simultaneously removing a variable, so that solving the smaller system produces a nontrivial solution of the original system.

In fact, first consider a  two variable equation, which we write as $ax + by \ni 0$. Picking one of the variables to eliminate, say $y$, we set $y = -b^{-1} ax$, and make this substitution for all occurrences of $y$ in the system. Then $ax + by = ax - ax \ni 0$ and the equation can be eliminated from the system since it will always be true. Note that removing a two variable equation may create additional two variable equations. For example, the three variable equation $cw + dx + ey \ni 0$ reduces to a two variable equation when we substitute $y = -b^{-1} ax$ in it.
This substitution gives $cw + dx + e(-b^{-1} ax) \ni 0$, or $cw + (d - eab^{-1}) x \ni 0$.
While $(d - eab^{-1})$ need not be a single element, that is not a problem.
We replace $(d - eab^{-1})$ in the equation by any of its nonzero elements. (Nonzero elements exist,  since for any Massouros hyperfield $F$ and nonzero elements $x,y \in F$, we have $|x+y|\geq 2$.) Any solution of this modified equation will also solve the original.

Now, we may assume that all two variable equations have already been removed from the system. If we now have a one variable equation $ax \ni 0$, we can set $x = 0$ throughout the system.  
This makes the equation $ax \ni 0$ true, so we remove it from the system.
This produces a new system with fewer equations than variables, where nontrivial solutions of the new system produce nontrivial solutions of the old system.
As before, ``zeroing out'' a variable in this way can produce additional one variable or two variable equations.
We continue the process until no one variable or two variable equations remain. Since the resulting system has more variables than equations, it is not vacuous.

We can generalize the argument for one variable equations in the following way.

\begin{mydef}\label{definition: subsystem and pile}
A {\em subsystem} of a system of homogeneous linear equations consists of some subset of the equations, together with all of the variables occurring in those equations. Call a subsystem of our system of equations a {\em pile} if it has at least as many equations as variables.   
\end{mydef}

If our system contains a pile, we may set all of the variables in the pile equal to zero and produce a smaller system that has more variables than equations.
A solution of the smaller system then extends to a solution for the original system.
Thus we may reduce the proof to showing that systems with no piles (which we call {\em pilefree}) have nontrivial solutions.

Now that we are only dealing with systems where all equations have $3$ or more variables, we replace the original problem of solving a system of homogeneous linear equations with the problem of solving the system {\em strengthened system}, made by strengthening all those equations (Definition \ref{definition: strengthening}). Since any solution of the following equation
\[
a_1 x_1 + a_2 x_2 + \cdots +a_k x_k \supseteq F
\]
is a solution of the following equation
\[
a_1 x_1 + a_2 x_2 + \cdots +a_k x_k \ni 0,
\]
this is valid. 

We may also assume that all equations have exactly three variables.
Our solution process will use the fact that the hyperfield has large sums so that we can ignore all variables in an equation after the first three that we assign values to.
This means that we can replace an equation with more than three variables by any equation made by reducing it to the sum of three terms.
Solving the system with these reduced equations will also produce a solution to the original system.
The one issue that might arise in passing to a system consisting only of three variable equations is that doing so might produce piles. To resolve this, we prove the following.

\begin{claim}\label{3-variable claim}
Consider a pilefree system with more variables than equations where all equations have three or more variables.
Then all equations with more than three terms may be reduced to three variable equations by removing terms, in such a way that the resulting system is pilefree, consists only of three variable equations, and where any solution of the reduced system is a solution of the original.
\end{claim}
\begin{proof}
Given a pilefree system where all equations have three or more variables, we define its {\em excess} to be the sum (over all equations) of the number of variables more than three in that equation.
We prove the claim by induction on the excess of a system.
The basis is the case where all equations have exactly three variables, no reduction is needed in this case.
Now consider a pilefree system where the equation $E_1$ is 
\begin{equation}\label{eq: E_1}
  a_1 x_1 + a_2 x_2 + a_3 x_3 + \cdots + a_k x_k, \quad \textrm{with $k>3$}.  
\end{equation}
Consider removing the term $a_1 x_1$ from $E_1$.
If doing this produced a pilefree system, we would be done.
So assume it does not produce a pilefree system. Then in the new system, there is a pile with 
\[
a_2 x_2 + a_3 x_3 + \cdots + a_k x_k
\]
as one of its equations. Call this pile $P_1$, and note that the variables $x_2, x_3, \dots, x_k$ are all in $P_1$, and that $x_1$ does not appear in $P_1$ since the original system is pilefree.
Let $v_1$ be the number of variables in $P_1$.
Since $P_1$ is a pile, there must be at least $v_1$ equations on the variables in $P_1$ in the new system.
The original system is the same, except it had $E_1$ instead of $a_2 x_2 + a_3 x_3 + \cdots + a_k x_k$, and thus must have at least $v_1 - 1$ many equations on the variables in $P_1$.

Now repeat the argument, considering the removal of the term $a_2 x_2$ from $E_1$.
We may again assume that this produces a pile in the new system, since otherwise we would be done.
This gives us a corresponding pile $P_2$ that would be made by removing $a_2 x_2$, where there are $v_2$ variables in $P_2$ and at least $v_2 - 1$ equations in those variables in the original system.

Let $v_{12}$ be the number of variables that appear in both $P_1$ and $P_2$.
Since our original system is pilefree, it has at most $v_{12} - 1$ equations that only use those variables.
Now consider $U$, the union of the sets of variables in $P_1$ and $P_2$.
The set $U$ contains $v_1 + v_2 - v_{12}$ variables, and includes all the variables appearing in $E_1$.

Counting the equations on variables in $U$ in the original system, we have $E_1$, the $e_1 \geq v_1 - 1$ or more equations in $P_1$, and the $e_2 \geq v_2 - 1$ or more equations in $P_2$.  This  double-counts the equations in variables that are in both $P_1$ and $P_2$, but there are at most $v_{12} - 1$ such double-counted equations.
This gives us at least 
\begin{equation}
1 + e_1 + e_2 - (v_{12} - 1) \geq 1 +(v_1 - 1) + (v_2 - 1) - (v_{12} - 1) = v_1 + v_2 - v_{12} 
\end{equation}
equations on $U$, giving us a pile in the original system.
This contradiction completes the proof of the claim. Finally, any solution of the reduced system is a solution of the original system since the hyperfield has large sums we we noted above.
\end{proof}

\begin{claim}\label{claim: second claim}
Consider a pilefree system of homogeneous linear equations with more variables than equations, where all equations have exactly $3$ variables.
Then this system has a solution where all of the variables are assigned nonzero values.    
\end{claim}
\begin{proof}
We prove the stronger claim, that every pilefree system of {\em strengthened} three variable equations with more variables than equations has a solution with all variables nonzero.
To do this, we proceed by induction on the number of variables in the system.

Our basis is the smallest non-vacuous case, where the system has three variables, call them $x$, $y$ and $z$.
Then there are two or fewer 3-variable equations on these three variables.
We will deal with the situation where there are two 3-variable equations, since the others can be reduced to it by adding equations.
Let the two equations be
\begin{equation}\label{eq: system}
ax + by + cz \supseteq F \quad \textrm{and}\quad dx + ey + fz \supseteq F,
\end{equation}
where $a,b,c,d,e$ and $f$ are nonzero. Take $x$ and $y$ to be nonzero elements of F with
\[
\phi(ax) = \phi(by).
\]
First, suppose that $\phi(dx) = \phi(ey)$. Then, any $z$ satisfying the following conditions
\begin{equation}\label{eq: conditions}
\phi(cz) \neq \phi(ax) \quad \textrm{and} \quad \phi(fz) \neq \phi(dx)
\end{equation}
yields a solution to the system \eqref{eq: system}. Since $|\phi(G)| > 2$, a nonzero value for $z$ can be found to satisfy \eqref{eq: conditions}.

Now, consider the case where $\phi(dx) \neq \phi(ey)$. The second equation is solved if $\phi(fz) = \phi(dx)$ or $\phi(fz) = \phi(ey)$, that is if 
$\phi(z) \in \{\phi(f^{-1}dx),\phi(f^{-1}ey\}$.
This gives us two values of $\phi(z)$ to choose from, so for at least one of these values of $\phi(z)$, we also have $\phi(cz) = \phi(c)\phi(z)\neq \phi(ax)$, giving a solution of both equations, and proving the basis case.

Now for the induction step.
Suppose we have a system ${\mathcal S}$ on $v+1$ many variables, where the claim holds for all systems on $v$ variables.
Let $V$ be the set of all variables in this system. 
Since ${\mathcal S}$ has $v$ or fewer equations on $3$ variables, the Pigeonhole Principle gives us that there is some variable $z$ which is in two or fewer $3$-variable equations.
We may as well assume there are two $3$-variable equations involving $z$, and write them as
\begin{equation}\label{eq: subtract}
au + bw + cz \supseteq F \quad \textrm{and} \quad dx + ey + fz \supseteq F.
\end{equation}
We are assuming that $u$, $w$ and $z$ are distinct variables, as are $x$, $y$ and $z$, although $\{u,w\}$ and $\{x,y\}$ may overlap.
Multiplying $dx + ey + fz \supseteq F$ by $c f^{-1}$, we produce the equivalent equation 
\begin{equation}
d'x + e'y + cz \supseteq F,
\end{equation}
where $d' = c f^{-1} d$ and $e' = c f^{-1} e$.

Let $E$ be the following equation
\begin{equation} \label{eq: E}
E: au + bw + d'x + e'y \supseteq F.
\end{equation}
We let ${\mathcal S'}$ be the system on the set of variables $V - \{z\}$ with equations those of ${\mathcal S}$ with the two equations in \eqref{eq: subtract} omitted and $E$ added. We claim that ${\mathcal S'}$ is pilefree. Indeed, suppose that $P'$ is a pile in $\mathcal{S}'$. Then, $P'$ should contain the equation $E$ since otherwise $P'$ is a pile in $\mathcal{S}$. Now, we can produce a pile in ${\mathcal S}$ as follows. Let $P$ be the subsystem made by adding $z$ to the variables of $P'$, removing the equation $E$, and adding the two equations in \eqref{eq: subtract}. This has one more variable and one more equation than $P'$ does, and is a pile if $P'$ is. Hence, $\mathcal{S}'$ is pilefree as claimed.

The system ${\mathcal S'}$ has a $4$-variable equation, but we may argue, as in Claim \ref{3-variable claim}, to produce a system with all $3$-variable equations, and then use the solution to that system to solve ${\mathcal S'}$.
So by the induction hypothesis, ${\mathcal S'}$ has a solution with all of its variables nonzero.

We will find a value of $z$ that gives a nonzero solution of ${\mathcal S}$. First, consider the following:
\begin{equation}\label{eq: terms}
\phi(au), \quad \phi(bw), \quad \phi(d'x), \quad \textrm{ and} \quad \phi(e'y).
\end{equation}
There are two cases. 

\underline{Case 1:} Suppose first that not all of the terms in \eqref{eq: terms} are distinct. If two terms in the same equation have equal images, we may assume that these terms are $au$ and $bw$.
Then to have a solution of $au + bw + cz \supseteq F$, we only need to pick $z$ so that $\phi(cz) \neq \phi(au)$.
This brings us to two subcases.
If $\phi(d'x) = \phi(e'y)$, we also need to pick $z$ so that $\phi(cz) \neq \phi(d'x)$, but this can be done since $|\phi(G)| > 2$.
If $\phi(d'x) \neq \phi(e'y)$, we argue as after \eqref{eq: conditions}.
We need to pick $z$ so that $\phi(cz) = \phi(d'x)$ or $\phi(cz) = \phi(e'y)$;  this can be done since $\phi(cz) \neq \phi(au)$ in at least one case.

Now assume that no terms in the same equation have equal images.
Since the images of the four terms are not distinct, we must have terms in different equations with equal images.
We may assume that $\phi(au) = \phi(d'x)$.
Now we take $z$ so that $\phi(cz) = \phi(au) = \phi(d'x)$.
This solves $au + bw + cz \supseteq F$, since $\phi(au) = \phi(cz)$ and $\phi(bw) \neq \phi(au)$.
Similarly, we have $\phi(d'x) = \phi(cz)$ by our choice of $z$, and $\phi(d'x) \neq \phi(e'y)$ since $d'x$ and $e'y$ appear in the same equation.
Thus $d'x + e'y + cz \supseteq F$. 

\underline{Case 2:} The second case is where $\phi(au)$, $\phi(bw)$, $\phi(d'x)$ and $\phi(e'y)$ are all different.
Since $(au + bw) + (d'x + e'y) \supseteq F$, there is an element $s$ of $au + bw$ where $-s \in (d'x + e'y)$.
If $s = 0$, we would have $au = -bw$, giving $\phi(au) = \phi(bw)$, a contradiction.
Now we take $z$ nonzero with $s = cz$.
This gives us
\[
s + (-s) \in cz + d'x + e'y \quad \textrm{and}\quad  s + s \in cz + au + bw.
\]
These facts are similar enough that we can use them to solve the two strengthened equations, writing $\pm s$ and only using that $\phi(s) = \phi(\pm s)$.

We demonstrate the process using $s + \pm s \in (au + bw) + cz$.  We have $\phi(\pm s) = \phi(s)$, so 
\[
G - \phi^{-1}(\phi(s)) \subseteq au + bw + cz.
\]
Now at least one of $\phi(au)$ and $\phi(bw)$ is distinct from $\phi(\pm s)$, suppose that $\phi(au) \neq \phi(\pm s)$. Then, we have
\[
\phi^{-1}(\phi(\pm s)) \subseteq au + \pm s \subseteq (au + bw) + cz.
\]
Together, these show $au + bw + cz = F$.
The proof that $d' x + e' y + cz = F$ is similar.

This completes our inductive proof of the claim.
\end{proof}

We may now complete the proof of Theorem \ref{theorem: main theorem} as follows.
Let $F$ be a Massouros hyperfield and $\mathcal{S}$ be a homogeneous system of linear equations over $F$. 
\begin{enumerate}
\item We may assume that none of the equations in $\mathcal{S}$ has exactly one or two variables occurring in it. 
\item We can reduce to the case where $\mathcal{S}$ is pilefree (Definition \ref{definition: subsystem and pile}) and all equations in $\mathcal{S}$ have exactly three variables by Claim \ref{3-variable claim}. 
\item Then, the theorem follows from Claim \ref{claim: second claim}.
\end{enumerate}
\end{proof}

Here is an example where we solve a system, illustrating many of the ideas in the proof.

\begin{myeg}
We will work in the Massouros hyperfield from Example \ref{example: Massouros}, so we have
\[
G = \{1,-1,a,-a,a^2,-a^2\},
\]
where $\phi(1)=\phi(-1)=1$,  $\phi(a)=\phi(-a)=a$ and  $\phi(a^2)=\phi(-a^2)=a^2$.

Consider the system:
\begin{align*}
&ar &+ &a^2s &+ &u &+ &v &- &w &+ &x &+ &y &+ &z& &\ni 0\\
&r &- &s  & &   &   + &v &-&aw  &  & & &   &+&az& &\ni 0\\
&ar &- &s &+ &au &+ &av &+ &w & & &+ &ay & & & &\ni 0\\
& & & & & & &av &+ &a^2w & & & & &+ &az& &\ni 0\\
& & & & & & & & & & &ax &+ &a^2y &+ &z& &\ni 0\\
& & & & & & & & & & &x &- &a^2y &+ &az& &\ni 0\\
& & & & & & & & & & &x &+ &ay &- &z& &\ni 0\\
\end{align*}
The last three equations form a pile, so we will set $x$, $y$ and $z$ equal to zero, yielding the simpler system:
\begin{align*}
&ar &+ &a^2s &+ &u &+ &v &- &w & &\ni 0\\
&r &- &s  & &   &   + &v &-&aw  &   &\ni 0\\
&ar &- &s &+ &au &+ &av &+ &w &  &\ni 0\\
& & & & & & &av &+ &a^2w &  &\ni 0\\
\end{align*}
The fourth equation is the two-variable equation $av+a^2w \ni 0$.
We solve this by setting $av = -a^2w$, giving $w = -a^2v$.  
Substituting this in for $w$, we obtain:
\begin{align*}
&ar &+ &a^2s &+ &u &+ &v  & &\ni 0\\
&r &- &s  & &   &   + &av   &   &\ni 0\\
&ar &- &s &+ &au &+ &av  &  &\ni 0\\
\end{align*}
Here, we have replaced $(1+a^2)v$ with $v$, since $1 \in 1+a^2$.
Similarly, we replaced $(1+1)v$ with $av$ and $(a-a^2)v$ with $av$.
Any solution of this system will solve the previous one.

The first and third equations have more than three variables, so we reduce them to three-variable equations by removing the terms $+a^2s$ and $+av$, respectively.
(We can remove terms arbitrarily, as long as we do not create a pile.)
We now have the system:
\begin{align*}
&ar & & &+ &u &+ &v  & &\supseteq F\\
&r &- &s  & &   &   + &av   &   &\supseteq F\\
&ar &- &s &+ &au & &  &  &\supseteq F\\
\end{align*}
We are now using strengthened equations.
There must be a variable appearing in two or fewer equations.
We pick one such variable, say $v$.
The induction step combined the equations containing this variable, producing a smaller system.
In our case, the first two equations are $ar+u+v \supseteq F$ and $r-s+av \supseteq F$.
We multiply the second equation by $a^2$, giving the equation $a^2r - a^2s +v \supseteq F$ which also ends in $v$.
Now we add the two equations and remove $v$, giving 
the equation 
\begin{equation}\label{eq: EE}
E:~ar+u+a^2r-a^2s \supseteq F,
\end{equation}
which was also called $E$ in \eqref{eq: E}.
In this case, $r$ appears twice in the equation so we replace $(a+a^2)r$ with $ar$, giving 
$ar-a^2s+u \supseteq F$.
This gives us the reduced system:
\begin{align*}
&ar &- &a^2s &+ &u & &\supseteq F\\
&ar &- &s &+ &au &   &\supseteq F\\
\end{align*}
This is our basis case.
We focus on the first equation.
We pick nonzero values of r and s that make $\phi(ar) = \phi(-a^2s)$, say $r=1$ and $s = a^2$.
In the second equation, we then have $\phi(ar) = a$
and $\phi(-s) = a^2$, putting us in the case where these two terms are mapped to different elements by $\phi$.
In this case, we obtain a solution by choosing $u$ so that the term $u$ in the first equation has $\phi(u) \neq \phi(ar) = \phi(-a^2s) = a$, solving the first equation.
To solve the second equation, we also need that $u$ is such that the term $au$ has $\phi(au)$ equal to one of 
$\phi(ar) = a$ and $\phi(-s) = a^2$.
Picking $u$ so that $\phi(au) = a^2$ would make 
$\phi(u) = a$, which does not solve the first equation.
But that is not a problem, we pick $u$ so that 
$\phi(au) = a$, say $u = -1$, solving the second equation.
Since this choice has $\phi(u) = 1 \neq a$, it also solves the first equation.
Thus setting $r=1$, $s=a^2$ and $u=-1$ gives a solution of the basis system.

Now we return to the two equations we combined into the equation $E$ in \eqref{eq: EE} in the induction step.
Keeping them in the form where they both have the term $v$, we have:
\begin{align*}
&ar & & &+ &u &+ &v  & &\supseteq F\\
&a^2r &- &a^2s  & &   &   + &v   &   &\supseteq F\\
\end{align*}
The argument at this stage had several cases.
We have $r=1$, $s=a^2$ and $u=-1$, giving a case where
$\phi(ar) = a$, $\phi(u) = 1$, $\phi(a^2r) = a^2$ and 
$\phi(-a^2s) = a$.
Here, both equations have a term that $\phi$ takes to $a$, so we choose $v$ so that $\phi(v) = a$ as well, say $v = a$.
In the first equation we have $\phi(ar) = \phi(v) \neq \phi(u)$, which solves it.
The second equation has $\phi(a^2r) \neq \phi(-a^2s) = \phi(v)$, and is also solved.

Completing the solution, we have $w = -a^2v = -1$, and 
$x = y = z = 0$.
\end{myeg}

\section{Further remarks} \label{section: Further remarks}

The original impetus for this paper was to investigate whether or not FETVINS held in hyperfields known to be non-quotient.
If it did, this would be evidence that FETVINS held in all hyperfields, since we already know it holds in quotient hyperfields.

The paper \cite{massouros2023on} presents a good summary of known non-quotient hyperfields.
It does unfortunately have an error in its Theorem 12.
This concerns modifications of a version of tropical hyperfields.
We start with the following construction.

\begin{mydef} \label{def ordered hyperfield}
Let $\langle E,\cdot \rangle$ be a totally ordered multiplicative group, with an order compatible with its multiplication.
Adjoin a least element $0$, and extend $\cdot$ to $E \cup \{0\}$ by defining $0 \cdot x = x \cdot 0 = 0$ for all $x$.
We have two possible ways to define the hyperaddition on $E \cup \{0\}$.

Using the following definition gives us what we call a {\em closed-ordered} hyperfield.
\begin{equation}
x+y = \begin{cases}
    \max\{x,y\} & \textrm{if}\quad x \neq y\\
    \{z \mid z \leq x\} & \textrm{if}\quad x = y
\end{cases}
\end{equation}

While the following slight modification gives us what we call an {\em open-ordered} hyperfield.
\begin{equation}
x+y = \begin{cases}
    \max\{x,y\} & \textrm{if}\quad x \neq y\\
    \{z \mid z < x\} & \textrm{if}\quad x = y
\end{cases}
\end{equation}
\end{mydef}

Closed-ordered hyperfields are well known, and were introduced by J. Mittas in the early paper \cite{mittas1973}.
Many of them are quotient hyperfields, isomorphic to those obtained using valuations on fields.

The similar open-ordered hyperfields seem to be an obvious modification.
They are mentioned by O. Viro in \cite{viro2010}, where the addition for closed-ordered hyperfields is said to give a ``linear order multigroup'', and the addition for open-ordered hyperfields gives a ``strict linear order multigroup''.

Both types of hyperfields are also discussed by C. Massouros and G. Massouros in \cite{massouros2023on}.
They claim there that all open-ordered hyperfields are non-quotient.
This is false, and the published proof is incorrect.

The argument there is a proof by contradiction, it assumes that 
$\langle E,+,\cdot \rangle$ is an open-ordered hyperfield, isomorphic to a quotient of a field $\langle R, +, \cdot \rangle$ by a multiplicative subgroup $Q$ of $R - {0}$.
Then referring to $1,2,3, \dots$ in $R$, we have $1 \in R$ and $2 = 1+1 \in Q + Q$.
Since $E$ is open-ordered, $Q$ and $Q+Q$ are disjoint, and thus $2Q$ is a distinct class from $Q$.
Continuing, $2Q < Q$, and calculating in $E$ we get $2Q + Q = Q$, giving
$3 = 2+1 \in Q$.
We also have $4 \in 2Q + 2Q$, so $4Q < 2Q$ and the classes $4Q$, $2Q$ and $Q$ are distinct.
The flaw in the proof is a calculation that $7 \notin Q$.
In fact, $7 = 4+3 \in 4Q + 3Q = 4Q + Q = Q$, and there is no contradiction.

One may continue the proof's attempted classification of the classes $nQ$ for $n \in {\mathbb N}$.
Doing so, one obtains that
$Q = 3Q = 5Q = 7Q, \dots$, that
$2Q = 6Q = 10Q = 14Q \dots$, that
$4Q = 12Q = 20Q = \dots$, and so on.
There is no contradiction here, it turns out these can be obtained using the dyadic valuation on the field of rational numbers.

\begin{myeg}
Let ${\mathbb Q}$ be the field of rational numbers, and let 
$P = \{ a/b \colon a \;\textrm{and}\; b\; \textrm{are odd} \}$.
Then $P$ is a multiplicative subgroup, and we can form the quotient hyperfield ${\mathbb Q}/P$.
This quotient has classes $2^nP$ for each $n \in {\mathbb Z}$, with
\[
\cdots < 4P < 2P < P < (1/2)P < \cdots.
\]
We have that $P+P$ is the set of sums
$a/b + c/d = (ad + bc)/cd$ for $a$,$b$,$c$ and $d$ all odd.
But $ad+bc$ will be even, so all of $P+P$ is less than $P$, and
$P+P = \{\dots, 8P,~4P,~2P\}$.
Multiplying through by $2^n$, we get
\[
2^nP+2^nP = \{\dots ,2^{n+3}P,~2^{n+2}P,~2^{n+1}P\} 
\]
for any integer $n$.  This shows that ${\mathbb Q}/P$ is open-ordered.
\end{myeg}

There seem to be other open-ordered hyperfields which are quotients.
It looks like one can take transcendental extensions of ${\mathbb Q}$, extend the dyadic valuation to them non-trivially, and produce other quotient hyperfields with are open-ordered.

The above seems to remove the class of open-ordered hyperfields from serious consideration as a source of possible examples of hyperfields where FETVINS fails.

The remaining class know to contain infinitely many non-quotient hyperfields is given by A. Nakassis in \cite{nakassis1988recent}.
It consists of hyperfields of the following form.

\begin{mydef} \label{Nakassis hyperfield}
Let $G$ be a group with more than $3$ elements, written multiplicatively.
Add a new element $0$, and let $F = G \cup \{0\}$, where $x \cdot 0 = 0 \cdot x = 0$ for all $x \in G$.
Now define the hyperaddition $+$ on $F$ as follows.

\begin{align*}
 x+y &= F - \{0,x,y\} & &\textrm{for all distinct}\; x,y \in G\\
 x+x &= \{0,x\} & &\textrm{for all}\; x \in G\\
 x+0 &= 0+x = x & &\textrm{for all}\; x \in F
\end{align*}
\end{mydef}

It is plausible that a proof similar to that of our Theorem \ref{theorem: main theorem} could show that all or almost all of the above hyperfields have FETVINS.
We have an analogue of large sums for almost all of these hyperfields.
\begin{mythm}
Suppose that $F$ is a hyperfield as in Definition \ref{Nakassis hyperfield}, where $|F| \geq 6$.
Let $x$, $y$ and $z$ be three distinct nonzero elements of $F$.
Then $x + y + z = F$.
\end{mythm}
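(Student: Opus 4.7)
The plan is to compute $x+y+z$ directly using associativity and the explicit definition of hyperaddition. Since $x,y$ are distinct and nonzero, the first clause of Definition \ref{Nakassis hyperfield} gives $x+y = F - \{0,x,y\}$. Therefore
\[
x+y+z = \bigcup_{w \in F - \{0,x,y\}} (w+z),
\]
and the task is to show this union equals all of $F$.

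Next I would split the union according to whether $w = z$ or not. Since $z$ is a nonzero element distinct from $x$ and $y$, $z$ itself is one of the $w$'s appearing in the union. The contribution of $w = z$ is $z+z = \{0,z\}$ by the second clause. For every other $w$, i.e. $w \in G - \{x,y,z\}$, $w$ is a nonzero element distinct from $z$, so $w+z = F - \{0,w,z\}$ by the first clause.

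The main step is then to show
\[
\bigcup_{w \in G - \{x,y,z\}} \bigl(F - \{0,w,z\}\bigr) \;=\; F - \{0,z\}.
\]
An element $a \in F$ fails to lie in this union precisely when $a \in \{0,w,z\}$ for every $w \in G - \{x,y,z\}$. Elements $0$ and $z$ always satisfy this. Any other $a$ would have to equal $w$ for every such $w$, which is impossible as soon as $G - \{x,y,z\}$ contains at least two distinct elements. This is exactly where the hypothesis $|F| \geq 6$ enters: it gives $|G| \geq 5$, hence $|G - \{x,y,z\}| \geq 2$. Combining the two pieces yields $x+y+z = \{0,z\} \cup (F - \{0,z\}) = F$, as required.

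The only real obstacle is the cardinality bookkeeping in the last step; everything else is a direct unfolding of the definition. It is worth noting that the hypothesis $|F| \geq 6$ is sharp for this argument: if $|G - \{x,y,z\}| = 1$, say with unique element $w_0$, then the union misses $w_0$, and the identity $x+y+z = F$ can fail.
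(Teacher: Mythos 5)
Your proof is correct and follows essentially the same route as the paper: it uses $x+y = F-\{0,x,y\}$, the fact that $z$ and at least two further nonzero elements lie in this set (which is where $|F|\geq 6$ enters), and then observes that $(F-\{0,v,z\})\cup(F-\{0,w,z\})\cup\{0,z\}=F$. The only cosmetic difference is that the paper picks two specific extra elements and concludes $x+y+z\supseteq F$, while you compute the full union exactly; your sharpness remark for $|F|=5$ is also accurate.
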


\begin{proof}
Since $|F| \geq 6$, there are two additional nonzero $v,w \in F$ where 
$v$, $w$, $x$, $y$ and $z$ are all distinct.
Now $x+y = F - \{0,x,y\}$, so we have 
$v,w,z \in x+y$.
Then \[
x+y + z \supseteq \{v,w,z\} +z =
(F-\{0,v,z\})\cup(F-\{0,w,z\})\cup\{0,z\} = (F-\{0,z\})\cup\{0,z\} = F
\]
\end{proof}

\bibliography{rank}\bibliographystyle{alpha}
\end{document}